 \numberwithin{dummy}{section}
\newtheorem*{algorithm-PF}{Primal Formulation}
\newtheorem*{algorithm-PMF}{Primal-Mixed Formulation}
\newtheorem*{algorithm-DMF}{Dual-Mixed Formulation}
\newtheorem*{algorithm-PF-C}{Conforming Finite Element Method}
\newtheorem*{algorithm-PMF-C}{Conforming Primal-Mixed Finite Element Method}
\newtheorem*{algorithm-DMF-C}{Conforming Mixed Finite Element Method}
\newtheorem*{algorithm-hmfem}{Hybridized Mixed Finite Element Method}
\newtheorem*{algorithm-primal-wgfem}{Primal WG-FEM}
\newtheorem*{algorithm-primalmixed-wgfem}{Primal-Mixed WG-FEM}
\newtheorem*{algorithm-mixed-wgfem}{Mixed WG-FEM}
\newtheorem*{algorithm-hmwgfem}{Hybridized Mixed WG-FEM}
\newtheorem*{algorithm-hdg}{Hybridizable Discontinuous Galerkin}
\newtheorem{remark}{Remark}
\numberwithin{equation}{section}
\newcommand{\bq}{{\bf q}}
\newcommand{\bn}{{\bf n}}
\newcommand{\bx}{{\bf x}}
\newcommand{\bv}{{\bf v}}
\newcommand{\bV}{{\bf V}}
\def\T{{\mathcal T}}
\def\E{{\mathcal E}}
\def\l{{\langle}}
\def\r{{\rangle}}
\def\bn{{\bf n}}
\def\bq{{\bf q}}
\def\bg{{\bm\gamma}}
\def\bbQ{\mathbb{Q}}
\newcommand{\pT}{{\partial T}}
\newtheorem{proposition}{Proposition}[section]
\def\3bar{{|\hspace{-.02in}|\hspace{-.02in}|}}
\renewcommand{\ldots}{\dotsc}
\begin{document}
\title{The basics of Weak Galerkin finite element methods}

\author{Junping Wang}
\email{junpingwang@gmail.com}
\author{Xiu Ye}
\address{Department of
Mathematics, University of Arkansas at Little Rock, Little Rock, AR
72204}
\email{xxye@ualr.edu}


\begin{abstract}
The goal of this article is to clarify some misunderstandings and inappropriate claims made in \cite{cock} regarding the relation between the weak Galerkin (WG) finite element method and the hybridizable discontinuous Galerkin (HDG). In this paper, the authors offered their understandings and interpretations on the weak Galerkin finite element method by describing the basics of the WG method and how WG can be applied to a model PDE problem in various variational forms. In the authors' view, WG-FEM and HDG methods are based on different philosophies and therefore represent different methodologies in numerical PDEs, though they share something in common in their roots. A theory and an example are given to show that the primal WG-FEM is not equivalent to the existing HDG \cite{cgl}.
\end{abstract}

\keywords{finite element methods, weak Galerkin methods, hybridizable discontinuous Galerkin.}

\subjclass[2010]{Primary: 65N30, 65N12; Secondary: 35J20, 35J35, 35J57}

\pagestyle{myheadings}

\maketitle

\section{Introduction}\label{Section:Introduction}

In this article we shall discuss the basics for the weak Galerkin finite element methods when applied to the following model problem: Find an unknown function $u$ from appropriate spaces such that
\begin{eqnarray}
-\nabla\cdot (a\nabla u)&=&f,\quad \mbox{in}\;\Omega,\label{pde}\\
u&=&0,\quad\mbox{on}\;\partial\Omega,\label{bc}
\end{eqnarray}
where $\Omega$ is a polygonal or polyhedral domain in $\mathbb{R}^d\ (d=2,3)$. The coefficient tensor $a=a(\bx)$ is assumed to be bounded, piecewise continuous, and uniformly symmetric and positive-definite in the domain $\Omega$. The article is written with the goal of clarifying some misunderstandings or inappropriate claims made in \cite{cock} regarding the relation between the weak Galerkin (WG) finite element method and the hybridizable discontinuous Galerkin (HDG) method. In this paper, we would like to offer our understandings and interpretations on the WG finite element method by describing the basics of the WG method and show how WG method can be applied to PDE problems. The model problem \eqref{pde}-\eqref{bc} is chosen merely for simplicity of the presentation.

The development of the WG finite element methods has been a learning and discovering process to the authors in the last eight years. The original idea of weak Galerkin stems out of the concept of ``discrete weak gradient'' or ``weak gradient'' presented in \cite{wg} and a workshop talk by one of the authors in the Chern Institute of Mathematics at Nankai University, China in June 2011. The concept of weak gradient was inspired by the development of exterior calculus that has been an active research topic in the last two decades. The use of stabilizer/smoother in the weak Galerkin formulations should be attributed to the development of the stabilized finite element method for the Stokes equation \cite{franca,JimJunping}, which leads to the improved and generalized versions of the weak Galerkin finite element methods (see, for example, \cite{mixed,wg-soe,wg-PkPk-1,wg-systematic,ww2016,ww2017,ww2018,
wg-elasticity,wg-biharmonic,CJ-biharmonic}). The authors have been gaining more and improved understandings on the basic principles of the weak Galerkin method and its potential in numerical PDEs since 2011 through the development of new WG schemes with the help from their collaborators and many other researchers in the scientific community. It has been demonstrated by recent publications that the WG methods, including the primal-dual weak Galerkin finite element method developed in \cite{ww2016,ww2017,ww2018}, are applicable to a wide range of PDE problems and have very promising potentials in scientific computing.
With respect to connections to HDG, in the authors' view, WG and HDG methods are based on different philosophies and therefore represent different methodologies in numerical PDEs, though they share something in common in their roots.

\medskip
The following is an overview of the content of the paper. For simplicity, we adopt the abbreviation FEM for {\em Finite Element Method}.
\begin{enumerate}
\item {\color{blue}\em Three variational formulations:} Section \ref{Section:3forms} is devoted to a presentation of three variational formulations for the model problem \eqref{pde}-\eqref{bc} \cite{robert-thomas}, which are: (1) primal variational form, (2) primal-mixed variational form, and (3) dual-mixed variational form (or, simply, mixed variational form).
\item {\color{blue} \em Three conforming FEMs:} For each of the variational forms, we shall discuss the corresponding conforming finite element method by following the usual Galerkin method; see Section \ref{Section:CFEMs} for details.
\item {\color{blue} \em Hybridized Mixed FEM:} For the conforming mixed finite element method (one of the three conforming FEMs), we shall describe its hybridized formulation in Section \ref{Section:hmfem} by following the approach originated in Fraeijs de Veubeke \cite{hmf}.
\item {\color{blue} \em Basic Principles of Weak Galerkin:} In Section \ref{Section:WG}, we shall introduce weak Galerkin as a generic numerical methodology for PDEs by describing its basic principles and characteristics. For each of the three variational formulations, we shall develop the corresponding weak Galerkin FEMs, which are:
    \begin{itemize}
    \item {\color{blue} Primal WG-FEM:} primal weak Galerkin finite element method,
     \item {\color{blue} Primal-Mixed WG-FEM:} primal-mixed weak Galerkin finite element method,
     \item {\color{blue} Mixed WG-FEM:} dual-mixed weak Galerkin finite element method, or simply, mixed weak Galerkin finite element method.
    \end{itemize}
\item {\color{blue}\em Hybridized Mixed WG-FEM:} In Section \ref{Section:hmwgfem}, we shall present a hybridized formulation for the Mixed WG-FEM by following the method of Fraeijs de Veubeke in \cite{hmf}. The {\em Hybridized Mixed WG-FEM} should be viewed as a parallel development of the {\em Hybridized Mixed FEM} in the weak Galerkin context.
\item {\color{blue} \em Reformulation of the Hybridized Mixed WG-FEM:} This reformulation of the {\em Hybridized Mixed WG-FEM} will be used for making comparisons between the {\em Hybridized Mixed WG-FEM} and the {\em Hybridizable Discontinuous Galerkin (HDG) Method} \cite{cock,cgl}, see Section \ref{Section:hmwgfem-re} for details.
\item {\color{blue}\em Comparison between HDG and the Hybridized Mixed WG-FEM:} In Section \ref{Section:hdg}, we shall investigate the similarities and the differences on the unknown variables and the governing linear equations between the HDG and the {\em Hybridized Mixed WG} methods. In particular, we demonstrate that HDG is a special case of the {\em Hybridized Mixed WG} when the finite element spaces are constructed in special ways.
\item {\color{blue} \em Primal WG $\neq$ HDG:} In Section \ref{Section:WG-NE-HDG}, we show that the primal WG-FEM is different from the existing HDG. The justification is given by a theory through reformulations for both the HDG and the weak Galerkin methods into forms that use the same set of basis functions, assuming that the same finite element spaces are employed in the discretizations. Furthermore, an illustrative example is included in this section to show the clear difference between the two methods.
\item {\color{blue} \em Remarks to \cite{cock}:} In Section \ref{Section:Remarks}, we shall make several remarks regarding a second HDG formulation and some claims made in \cite{cock}.
\end{enumerate}

Throughout the paper, we follow the usual notation for Sobolev spaces and norms. For any open bounded domain $D\subset \mathbb{R}^d$ ($d$-dimensional Euclidean space) with Lipschitz continuous boundary, we use $\|\cdot\|_{s,D}$ and $|\cdot|_{s,D}$ to denote the norm and seminorm in the Sobolev space $H^s(D)$ for any $s\ge 0$, respectively. The inner product in
$H^s(D)$ is denoted by $(\cdot,\cdot)_{s,D}$. The space $H^0(D)$ coincides with $L^2(D)$, for which the norm and the inner product are denoted by $\|\cdot \|_{D}$ and $(\cdot,\cdot)_{D}$, respectively. When $D=\Omega$, we shall drop the subscript $D$ in the norm and inner product notation. $H_0^1(\Omega)$ is the closed subspace of $H^1(\Omega)$ consisting of functions with vanishing trace on the boundary $\partial\Omega$. The Sobolev space $H(div;\Omega)$ consists of $L^2(\Omega)$ vector-valued functions with square integrable divergence.

\section{Three Variational Formulations}\label{Section:3forms}

In literature (cf. \cite{robert-thomas} for example), there are at least three variational formulations developed for the model second order elliptic problem (\ref{pde})-(\ref{bc}) to define weak solutions with various characteristics. Each of the variational formulations gives rise to a particular class of conforming finite element methods through the use of the standard Galerkin method formulated in abstract Hilbert spaces. This section shall describe three well-known variational formulations which form the starting point for three conforming Galerkin finite element methods and furthermore for sparking the development of weak Galerkin finite element methods.

The first variational formulation is known as the primal formulation -- the most commonly used one for the model problem \eqref{pde}-\eqref{bc}. The primal variational form can be obtained by testing the equation \eqref{pde} against any $v\in H_0^1(\Omega)$ with a use of the usual integration by parts for the model problem \eqref{pde}-\eqref{bc}. The following is a precise statement of the primal variational form:

\begin{algorithm-PF}
The primal variational form for the boundary
value problem (\ref{pde})-(\ref{bc}) is to find $u\in H_0^1(\Omega)$
such that
\begin{equation}\label{pf}
(a\nabla u,\nabla v)=(f,v)\qquad \forall v\in H_0^1(\Omega).
\end{equation}
\end{algorithm-PF}

For presenting the other two variational formulations, we shall rewrite the equation (\ref{pde}) into a system of first order partial differential equations by introducing the flux variable $\bq=-a\nabla u$ so that (\ref{pde})-(\ref{bc}) can be reformulated as follows:
\begin{eqnarray}
a^{-1}\bq+\nabla u&=&{\mathbf 0},\qquad \mbox{in } \Omega,\label{mix:001}\\
\nabla\cdot \bq&=&f,\qquad \mbox{in }\Omega,\label{mix:002}\\
u&=&0,\qquad\mbox{on } \partial\Omega.\label{mix:003}
\end{eqnarray}

By first testing the equation \eqref{mix:001} against any $\bv\in [L^2(\Omega)]^d$ and then the equation \eqref{mix:002} against any $w\in H_0^1(\Omega)$ with integration by parts,  we arrive at the following primal-mixed variation problem for \eqref{pde}-\eqref{bc}:

\begin{algorithm-PMF} The primal-mixed variational form for the  model problem (\ref{pde})-(\ref{bc}) is to seek $\bq\in  [L^2(\Omega)]^d$ and $u\in H_0^1(\Omega)$ such that
\begin{eqnarray}
(a^{-1}\bq,\bv)+(\bv,\nabla u)&=&0
\qquad\forall\bv\in [L^2(\Omega)]^d,\label{pm1}\\
(\bq,\nabla w)&=&-(f,w)\quad\forall w\in H_0^1(\Omega).\label{pm2}
\end{eqnarray}
\end{algorithm-PMF}

\medskip

Next, we test the equation \eqref{mix:001} against any $\bv\in H(div;\Omega)$ and apply the usual integration by parts $(\bv,\nabla u ) = - (\nabla\cdot\bv,u)$ to obtain the following dual-mixed variational form:

\begin{algorithm-DMF} The dual-mixed variational form for the model problem (\ref{pde})-(\ref{bc}) is to seek  $\bq\in  H(div,\Omega)$ and $u\in L^2(\Omega)$ such that
\begin{eqnarray}
(a^{-1}\bq,\bv)-(\nabla\cdot\bv,u)&=&0
\qquad\forall\bv\in  H(div,\Omega),\label{dm1}\\
(\nabla\cdot\bq,w)&=&(f,w)\qquad\forall w\in L^2(\Omega).\label{dm2}
\end{eqnarray}
\end{algorithm-DMF}
\smallskip

\section{Conforming Galerkin finite element methods}\label{Section:CFEMs}

Given a finite element triangulation $\T_h$ of the polygonal or polyhedral domain $\Omega$ by $d$-simplexes $T\in\T_h$ and a positive integer $k>0$, we define a finite dimensional space $V_h:=V_h^{(k)}$ as follows:
\begin{equation}\label{Vh}
V_h^{(k)}=\{v\in H_0^1(\Omega): \  v|_T\in P_k(T)\;\; \forall T\in\T_h\}.
\end{equation}
$V_h$ is clearly a subspace of $H_0^1(\Omega)$ consisting of continuous piecewise polynomials. The usual Galerkin method with the use of $V_h\subset H_0^1(\Omega)$ and the primal variational formulation \eqref{pf} leads to the following conforming Galerkin finite element method:
\begin{algorithm-PF-C} Let $V_h$ be given by \eqref{Vh}. The conforming Galerkin finite element method seeks $u_h\in V_h$ satisfying
\begin{equation}\label{pf-c}
(a\nabla u_h,\nabla v)=(f,v)\qquad \forall v\in V_h.
\end{equation}
\end{algorithm-PF-C}

\medskip
Next, we introduce another finite dimensional space $W_h:=W_h^{(k)}$ as follows:
\begin{equation}\label{Wh}
W_h^{(k)}=\left\{\bq\in [L^2(\Omega)]^d: \  \bq|_T\in [P_{k-1}(T)]^d\;\; \forall T\in\T_h \right\}.
\end{equation}
It is clear that $W_h$ is a subspace of $[L^2(\Omega)]^d$. The usual Galerkin method with the use of $W_h\times V_h\subset [L^2(\Omega)]^d\times H_0^1(\Omega)$ and the primal-mixed formulation \eqref{pm1}-\eqref{pm2} gives rise to the following conforming primal-mixed finite element method for the model problem \eqref{pde}-\eqref{bc}(cf. \cite{reddy-oden, babuska-oden-lee}):

\begin{algorithm-PMF-C} The primal-mixed finite element method for the  model problem (\ref{pde})-(\ref{bc}) is to seek $\bq_h\in  W_h$ and $u_h\in V_h$ such that
\begin{eqnarray}
(a^{-1}\bq_h,\bv)+(\bv,\nabla u_h)&=&0
\qquad\forall\bv\in W_h,\label{pm1-c}\\
(\bq_h,\nabla w)&=&-(f,w)\quad\forall w\in V_h.\label{pm2-c}
\end{eqnarray}
\end{algorithm-PMF-C}

\medskip

To construct a conforming dual-mixed approximation for the solution of \eqref{dm1}-\eqref{dm2}, we introduce a finite dimensional subspace $Z_h:=Z_h^{(k)}$ of $H(div;\Omega)$ and a finite dimensional subspace $U_h:=U_h^{(k)}$ of $L^2(\Omega)$ given as follows:
\begin{equation}\label{Wh-mixed-vector}
Z_h^{(k)}=\left\{\bq\in H(div;\Omega): \ \bq|_T\in D_k(T)\; \; \forall T\in\T_h\right\},
\end{equation}
and
\begin{equation}\label{Wh-mixed-scalar}
U_h^{(k)}=\{w\in L^2(\Omega): \  w|_T\in P_{k-1}(T)\; \; \forall T\in\T_h\},
\end{equation}
where $D_k(T):=[P_{k-1}(T)]^d+\bx P_{k-1}(T)$ is the space of vector-valued polynomials on $T$ in the form of $\bq=(q_1, q_2,\ldots, q_d)+(x_1, x_2,\ldots, x_d) q_0$ for some $q_i\in P_{k-1}(T),\ i=0,1,\ldots,d$. The usual Galerkin method based on the dual-mixed variational formulation \eqref{dm1}-\eqref{dm2} and the subspaces $Z_h\times U_h$ can be described as follows:

\begin{algorithm-DMF-C} The mixed finite element method (also known as the {\bf dual-mixed finite element method} in literature \cite{robert-thomas}) for the model problem (\ref{pde})-(\ref{bc}) is to seek $\bq_h\in Z_h $ and $u_h\in U_h$ satisfying
\begin{eqnarray}
(a^{-1}\bq_h,\bv)-(\nabla\cdot\bv,u_h)&=&0
\qquad\forall\bv\in  Z_h,\label{dm1-discrete}\\
(\nabla\cdot\bq_h,w)&=&(f,w)\qquad\forall w\in U_h.\label{dm2-discrete}
\end{eqnarray}
\end{algorithm-DMF-C}

The spaces $Z_h\times U_h$ were proposed and analyzed in \cite{rt} in the two-dimensional case (i.e., $d=2$) and in \cite{nedelec} in the three-dimensional case. In the literature, the space $Z_h\times U_h$ or $Z_h^{(k)}\times U_h^{(k)}$ is referred to as the Raviart-Thomas space of index $k$ (or sometimes $k-1$) or, in the three-dimensional case, as the Raviart-Thomas-Nedelec space. Other examples of the finite element spaces $Z_h\times U_h$ include the Brezzi-Douglas-Marini element \cite{bdm}; see \cite{mixed} for more examples.

\begin{remark}
The three conforming finite element methods (namely, the conforming finite element method, the conforming primal-mixed finite element method, and the conforming mixed finite element method) represent three distinct classes of finite element methods for the model problem \eqref{pde}-\eqref{bc}. They are clearly different numerical schemes for the general second order elliptic equations illustrated by the model problem \eqref{pde}-\eqref{bc}. The principle of the conforming finite element methods is applicable to a wide class of partial differential equations, which has been a major research direction in computational mathematics and scientific computing in the last several decades.
\end{remark}

\section{Hybridized mixed finite element method}\label{Section:hmfem}

The mixed finite element method (or dual-mixed finite element method) \eqref{dm1-discrete}-\eqref{dm2-discrete} can be hybridized by following the procedure originally developed in Fraeijs de Veubeke \cite{hmf}. Observe that the finite element functions $\bv\in Z_h\subset H(div;\Omega)$ must be continuous along the normal direction across each inter-element interface $\pT_1\cap\pT_2$ for any $T_i\in\T_h,\ i=1,2$. The idea of Fraeijs de Veubeke hybridization is to eliminate the inter-element continuity requirements from the space $Z_h$ thereby obtaining a space $\tilde Z_h$ and to impose instead the desired continuity on the solution $\bq_h\in \tilde Z_h$ via Lagrangian multipliers.

Denote by $\partial\T_h$ the set of the element boundaries of $\T_h$; i.e.,
$\partial\T_h=\bigcup_{T\in\T_h} \partial T$. For the given triangulation $\T_h$, we introduce the following finite element space:
\begin{equation}\label{Wh-mixed-vector-hybridized}
\tilde Z_h^{(k)}=\prod_{T\in\T_h}\left\{\bq\in [L^2(T)]^d: \  \bq|_T\in D_k(T)\right\}.
\end{equation}
Moreover, we denote by $\Lambda_h\subset L^2(\partial\T_h)$ the finite dimensional space defined by
\begin{equation}\label{LagrangeM-Space}
\Lambda_h=\left\{\sigma: \ \forall T\in\T_h, \exists\ \bq\in D_k(T)\ s.t.\ \bq|_\pT\cdot\bn_T=\sigma|_\pT \right\},
\end{equation}
where $\bn_T$ stands for the outward normal direction on $\pT$. Basically, $\Lambda_h$ consists of piecewise polynomials of degree $k-1$ on the set of element boundaries $\partial\T_h$. Denote by $\Lambda_h^0\subset \Lambda_h$ the subspace of $\Lambda_h$ with vanishing value on $\partial\Omega$; i.e.,
$$
\Lambda_h^0=\left\{\sigma\in\Lambda_h: \ \ \sigma|_{\pT\cap\partial\Omega}=0 \ \ \forall T\in\T_h\right\}.
$$
$\Lambda_h^0$ is best known as the Lagrangian space in the literature of mixed finite element method.

Throughout the paper, we use $\langle \phi, \psi\rangle_\pT$ to denote the $L^2(\pT)$ inner product for any $\phi, \psi\in L^2(\pT)$.

\begin{algorithm-hmfem}
Find $\bq_h\in \tilde{Z}_h$, $u_h\in U_h$, and $u_b\in \Lambda_h^0$ satisfying
\begin{eqnarray}
(a^{-1}\bq_{h}, \bv)_T - (\nabla\cdot\bv, u_h)_T +\langle u_b, \bv\cdot\bn_T\rangle_\pT &=& 0\quad \forall T\in\T_h,\ \bv\in D_k(T),\label{hmfem:01}\\
(\nabla\cdot\bq_h, w)_T &=& (f, w)_T\qquad \forall T\in\T_h,\ w\in U_h,\label{hmfem:02}\\
\sum_{T\in\T_h}\langle \sigma, \bq_h|_{\pT}\cdot\bn_T\rangle_\pT &=& 0\qquad \forall \sigma\in \Lambda_h^0.\label{hmfem:03}
\end{eqnarray}
\end{algorithm-hmfem}

The hybridized mixed finite element method \eqref{hmfem:01}-\eqref{hmfem:03} is equivalent to the mixed finite element method \eqref{dm1-discrete}-\eqref{dm2-discrete} in the sense that the numerical solutions for $\bq_h$ and $u_h$ resulting from both schemes are identical. In fact, the equation \eqref{hmfem:03} implies that the solution $\bq_h$ from \eqref{hmfem:01}-\eqref{hmfem:03} has continuous normal component across each inter-element interface. Thus, by restricting the test function $\bv$ to the closed subspace $Z_h\subset \tilde Z_h$ in \eqref{hmfem:01} one arrives at the system of equations \eqref{dm1-discrete}-\eqref{dm2-discrete}. Finally, the uniqueness of the solutions for \eqref{dm1-discrete}-\eqref{dm2-discrete} leads to the desired conclusion.

\section{Weak Galerkin Finite Element Methods}\label{Section:WG}

To the authors' understanding, \underline{\em Weak Galerkin Finite Element Method (WG-FEM)} is a generic numerical/discretization methodology for partial differential equations with the following guiding principles/characteristics:
\begin{enumerate}
\item {\bf\color{blue} Variational or weak form based:} the numerical method is based on a variational or weak form formulation for the underlying PDE problems;
\item {\bf\color{blue} Subspace approximation through domain partitioning:} the computational or physical domain where the PDEs are defined is partitioned into small subdomains (also known as {\em elements}), and the approximating functions are constructed on each subdomain or element;
\item {\bf\color{blue} Discrete weak differential operators:} the differential operators that are used to define the variational or weak form are locally reconstructed on each element using problem-independent tools. This could form a set of building blocks which may constitute a WG calculus;
\item {\bf\color{blue} Weak regularity/smoothness:} the continuity or regularity necessary to define the functions in the corresponding Sobolev (or Banach) spaces with which the variational or weak forms are defined is characterized by using carefully chosen stabilizers/smoothers for the approximating functions.
\end{enumerate}

In the rest of this section we shall apply the above principles to the model problem \eqref{pde}-\eqref{bc}. In particular, the three variational formulations described in Section \ref{Section:3forms} shall be considered; each will yield a particular class of weak Galerkin finite element method.

For simplicity, assume that the domain $\Omega$ is polygonal (for 2D problems) or polyhedral (for 3D problems). Let $\T_h$ be a polygonal or polyhedral partition of $\Omega$ consisting of general polygons or polyhedra. The set of element boundaries is denoted as $\partial\T_h=\bigcup_{T\in\T_h} \partial T$. Denote by $\E_h=\{e\}$ a finite element partition of $\partial\T_h$ consisting of elements of dimension $d-1$. Assume that the partition $\E_h$ is consistent with $\T_h$ in the following sense:
\begin{itemize}
\item For any $T\in \T_h$, the boundary $\pT$ is the collection of some elements in $\E_h$; i.e., $\pT=\bigcup_{i=1}^{N_T} e_i$ for some $e_i\in\E_h$;
\item For any $T_1,T_2\in\T_h$ such that $dim(\pT_1\cap\pT_2)=d-1$, the interface $\pT_1\cap\pT_2$ is the collection of some elements in $\E_h$; i.e., $\pT_1\cap\pT_2 = \bigcup_{j=1}^{N_{T_1T_2}} e_j$ for some $e_j\in\E_h$.
\end{itemize}

Observe that the polygonal or polyhedral partition $\T_h$ enjoys more flexibility in its construction than those used in the conforming finite element method in that: (1) each element can be of arbitrary shape, and (2) the element interface $\pT_1\cap\pT_2$ can be a portion of a flat side/face of $\pT_i$ so that no ``hanging" nodes are necessary in the construction of the finite element functions.

\subsection{Primal WG-FEM}

Given a finite element partition $\T_h$ of the polygonal or polyhedral domain $\Omega$ by $d$-dimensional polytopes (polygons in 2D or polyhedra in 3D) and two integers $k>0$ and $s\ge 0$, we define a finite dimensional space $W_h^{(k,s)}$ as follows:
\begin{equation}\label{wh:01}
W_h^{(k,s)}=\{w=\{w_0,w_b\}:\; w_0|_T\in P_k(T), \ w_b|_e\in P_{s}(e) \ \forall \ e\in\E_h\cap\pT\mbox{ and } T\in\T_h\}.
\end{equation}
For convenience of discussion, we also introduce the following finite element space on $\partial\T_h$ associated with the partition $\E_h$:
\begin{equation}\label{LambdaSpace}
\Lambda_h^{(s)}=\{\sigma:\; \ \sigma|_e\in P_s(e)\; \forall e\in\E_h\}.
\end{equation}

\smallskip

{\color{blue}\bf Weak finite element spaces:}\
Denote by $W_h^0\subset W_h^{(k,s)}$ the closed subspace consisting of functions with vanishing boundary value; i.e.,
\begin{equation}\label{wh:02}
W_h^0=\{w=\{w_0,w_b\}\in W_h^{(k,s)}:\; w_b|_e = 0 \;\ \forall e\in\E_h\cap\partial\Omega\}.
\end{equation}
\smallskip

{\color{blue}\bf Discrete weak gradient:}\ For $w=\{w_0,w_b\}\in W_h^{(k,s)}$, we define on each element $T\in\T_h$ its discrete weak gradient $(\nabla_{w} w)|_T\in [P_{r}(T)]^d$ as follows:
\begin{equation}\label{w-g}
(\nabla_{w}w|_T, \bv)_T = -(w_0,\;\nabla\cdot \bv)_T+ \l w_b,\bv\cdot\bn_T \r_\pT\qquad\forall \bv\in [P_{r}(T)]^d,
\end{equation}
where $\bn_T$ stands for the unit outward normal vector on $\pT$.
\smallskip

{\color{blue}\bf Stabilizer/smoother:}\
The stabilizer/smoother in the current application is given as follows:
\begin{equation}\label{smoother}
s_p(w,\phi)  = \rho \sum_{T\in {\mathcal T}_h}
h_T^{-1}\langle Q_b w_0-w_b,\;Q_b \phi_0-\phi_b
\rangle_{\partial T},\quad w,\;\phi\in W_h^{(k,s)},
\end{equation}
where $Q_b$ is the local $L^2$ projection operator onto the space $\Lambda_h^{(s)}$, and $\rho>0$ is a parameter at user's choice. This form of the stabilizer/smoother is in response to the continuity requirement of the finite element functions in $H^1(\Omega)$. This smoother intends to provide a ``weak continuity'' for the weak finite element functions in the WG context.

\begin{algorithm-primal-wgfem}
The primal weak Galerkin finite element method (WG-FEM) \cite{wg-soe,wg-PkPk-1,wg-systematic} for the model problem \eqref{pde}-\eqref{bc} is to find $u_h=\{u_0,u_b\}\in W_h^0$ such that
\begin{equation}\label{wg}
(a\nabla_wu_h,\nabla_w v)+ s_p(u_h,\;v)=(f,\; v_0)\qquad \forall v=\{v_0,v_b\}\in W_h^0.
\end{equation}
\end{algorithm-primal-wgfem}

\smallskip
A typical example on the value of the integers $s$ (in the construction of $u_b$) and $r$ (in the construction of the discrete weak gradient) is given by
$$
s=k\ \mbox{or}\ k-1,\;\; r=k-1.
$$
The choice of $r=k-1$ for the discrete weak gradient space is due to the understanding of ``the gradient of polynomials of degree $k$ is a vector-valued polynomial of degree $k-1$''. For the case of $s=k$, the $L^2$ projection operator $Q_b$ in the construction of the stabilizer/smoother $s_p(\cdot,\cdot)$ becomes to be the identity operator so that the stabilizer/smoother can be rewritten as follows:
\begin{equation}\label{stabilizer:02}
s_p(w,\phi)  = \rho \sum_{T\in {\mathcal T}_h}
h_T^{-1}\langle w_0-w_b,\; \phi_0-\phi_b
\rangle_{\partial T},\quad w,\;\phi\in W_h^{(k,k)}.
\end{equation}
A systematic study of the primal WG-FEM method for arbitrary combinations of $(k,s)$ and $r$ can be found in \cite{wg-systematic}.

\subsection{Primal-Mixed WG-FEM}

In the primal-mixed weak Galerkin FEM, we may use the weak finite element space $W_h^0$ given by (\ref{wh:02}) to approximate the primal variable $u=u(\bx)$ in (\ref{pm1})-(\ref{pm2}). For any $v\in W_h^0$, the discrete weak gradient $\nabla_w v$ on $T\in\T_h$ is defined by the equation \eqref{w-g}. For a numerical approximation of the flux variable $\bq$, we introduce the following finite element space:
\begin{equation}\label{pm-vector-space}
\bV_h=\left\{\bv\in [L^2(\Omega)]^d:\;\;\bv|_T\in [P_{m}(T)]^d\right\},
\end{equation}
where $m\ge 0$ is an integer.
 \medskip

\begin{algorithm-primalmixed-wgfem}
The primal-mixed weak Galerkin finite element method (Primal-Mixed WG-FEM) for the model problem (\ref{pde})-(\ref{bc}) is to seek $\bq_h\in \bV_h$ and $u_h=\{u_0,u_b\}\in W_h^0$ such that
\begin{eqnarray}
(a^{-1}\bq_h,\bv)+(\bv,\nabla_wu_h)&=&0\;\qquad\forall \bv\in \bV_h,\label{wg-pm1}\\
-s_p(u_h, w)+(\bq_h,\;\nabla_ww)&=&-(f,\;w_0)\qquad \forall w=\{w_0,w_b\}\in W_h^0,\label{wg-pm2}
\end{eqnarray}
where $s_p(u_h, w)$ is given in (\ref{smoother}).
\end{algorithm-primalmixed-wgfem}

\smallskip
A typical example on the value of the integers $s$ (in the construction of $u_b$), $r$ (in the construction of the discrete weak gradient), and $m$ (in the construction of $\bq_h$) is given by
$$
s=k\ \mbox{or}\ k-1,\;\; r=k-1,\; \; m\ge r
$$
so that an appropriate {\em inf-sup} condition holds true. Like the primal WG-FEM, the selection of $r=k-1$ for the discrete weak gradient space is also due to the understanding of ``the gradient of polynomials of degree $k$ is a vector-valued polynomial of degree $k-1$''. The condition of $m\ge r$ ensures a satisfaction of the following {\em inf-sup} condition: $\exists\ \beta>0$ such that
$$
\sup_{\bv\in \bV_h, \bv\neq 0} \frac{(\bv,\nabla_w w)}{\| \bv\|_0} \ge \beta \|\nabla_w w\|_0,\qquad \mbox{for } w\in W_h^0.
$$

\smallskip

To the authors' knowledge, there was no published work available in the existing literature that addresses the theory and convergence of the primal-mixed weak Galerkin finite element method \eqref{wg-pm1}-\eqref{wg-pm2}, as it was believed that the primal-mixed WG-FEM has a direct connection with the hybridizable discontinuous Galerkin (HDG) method for the case of $s=k, \ r=k-1$, and $m=r$. But these two methods are expected to be different for other selections of the finite element spaces $\bV_h$ and $W_h^0$. Interested readers are encouraged to conduct a systematic study on the convergence and stability of the primal-mixed WG-FEM by following the framework developed in \cite{wg-systematic}.

\subsection{Mixed WG-FEM}

The mixed weak Galerkin finite element method is based on the variational form (\ref{dm1})-(\ref{dm2}) for the model second order elliptic problem \eqref{pde}-\eqref{bc}. Note that the principle differential operator in (\ref{dm1})-(\ref{dm2}) is the divergence for which a discrete {\em weak divergence} must be introduced.

{\color{blue}\bf Weak finite element spaces:}\
Given a finite element partition $\T_h$ of the polygonal or polyhedral domain $\Omega$ by $d$-dimensional polytopes (polygons in 2D or polyhedra in 3D) and three integers $k\ge 0$, $s\ge 0$, and $r\ge 0$, we construct two finite dimensional spaces as follows:
\begin{equation}\label{mixed-wgfem-Uspace}
W_h^{(r)}=\{w\in L^2(\Omega):\; w|_T\in P_{r}(T) \; \; \forall T\in\T_h\},
\end{equation}
and
\begin{equation}\label{mixed-wgfem-Qspace}
Z_h^{(k,s)}=\left\{ \bv=\{\bv_0, \bv_b\}:\; \bv_0|_T\in [P_k(T)]^d,
\bv_b|_e\equiv v_b\bn_T, v_b|_e\in P_s(e),\;\forall e\in \E_h\cap\pT, T\in\T_h\right\},
\end{equation}
where $\bn_T$ is the unit outward normal vector on $e\in \E_h\cap\pT$ -- portion of the boundary $\pT$. Note that $v_b$ was meant to represent the outward normal component of the vector-valued weak function $\bv$ on each element $T\in\T_h$.

For convenience, we introduce the following finite element space on $\partial\T_h$ associated with the partition $\E_h$:
\begin{equation}\label{LambdaSpace:new}
\Lambda_h^{(s)}=\left\{\sigma:\;  \sigma|_e\in P_s(e)\; \; \forall e\in\E_h\right\}.
\end{equation}

{\color{blue}\bf Discrete weak divergence:}\ For any $\bv=\{\bv_0,\bv_b\}\in Z_h^{(k,s)}$, on each element $T\in\T_h$ we define its discrete weak divergence $(\nabla_{w}\cdot\bv)|_T\in P_{r}(T)$ as follows:
\begin{equation}\label{d-w-div}
(\nabla_{w}\cdot\bv|_T, \phi)_T = -(\bv_0,\;\nabla \phi)_T+ \l \bv_b,\;
\phi\bn_T \r_\pT\qquad\forall \phi\in P_{r}(T).
\end{equation}

{\color{blue}\bf Stabilizer/smoother:}\ The stabilizer/smoother shall be defined to provide a weak characterization for the continuity of the vector field along the normal direction across each element interface. The commonly used form of the stabilizer/smoother in the Mixed WG-FEM method is given by
\begin{equation}\label{smoother-mixed-wgfem}
s_m(\bq,\bv) = \rho \sum_{T\in {\mathcal T}_h}
h_T^{\alpha}\langle Q_b (\bq_0\cdot\bn_T)-q_b,\;Q_b (\bv_0\cdot\bn_T)-v_b
\rangle_{\pT}
\end{equation}
for $\bv,\;\bq\in Z_h^{(k,s)}$,
where $Q_b$ is the $L^2$ projection operator onto the space $\Lambda_h^{(s)}$, $\rho>0$ and $\alpha$ are parameters at user's choice. This choice of the smoother is in response to the continuity of the vector field across the element interfaces along the normal direction. The stabilization parameters in $\rho h_T^\alpha$ are related to the dimensional matching for all the forms involved.

\begin{algorithm-mixed-wgfem}\ Let $Z_h=Z_h^{(k,s)}$ and $W_h=W_h^{(r)}$.
The mixed weak Galerkin finite element method (Mixed WG-FEM) \cite{wg-mixed} for the model problem (\ref{pde})-(\ref{bc}) is to find $\bq_h=\{\bq_0,\bq_b\}\in Z_h$ and $u_h\in W_h$ such that
\begin{eqnarray}
s_m(\bq_h,\bv)+(a^{-1}\bq_0,\bv_0)-(\nabla_w\cdot\bv,\;u_h)&=&0\qquad \forall \bv\in Z_h,\label{wg-dm1}\\
(\nabla_w\cdot\bq_h,\;w)&=&(f,\;w)\qquad \forall w\in W_h.\label{wg-dm2}
\end{eqnarray}
\end{algorithm-mixed-wgfem}

\begin{remark}
The description of the mixed WG-FEM in \eqref{wg-dm1}-\eqref{wg-dm2} is more general than the one presented in \cite{wg-mixed} in that the finite element functions are allowed to have more options in their construction. For example, the component $q_b$ is set to be in $P_s(e)$ for an independent integer $s$ from $k$ (which is for $\bq_0$), and the space $W_h$ consists of piecewise polynomials of degree $r$ -- another independent integer. The current presentation follows the spirit of the systematic study for the primal WG-FEM method conducted in \cite{wg-systematic}. Interested researchers may fill all the gaps through a study for the numerical scheme \eqref{wg-dm1}-\eqref{wg-dm2}.
\end{remark}

To ensure a satisfaction of the {\em inf-sup} condition for the bilinear form $(\nabla_w\cdot\bv,\;u_h)$,
one may select the non-negative integers $k,s,$ and $r$ so that
\begin{equation}\label{inf-sup-pre}
k\ge r-1,\; s\ge r.
\end{equation}
Under the condition \eqref{inf-sup-pre}, it is not hard to derive the {\em inf-sup} condition of Babu\u{s}ka \cite{babuska} and Brezzi \cite{brezzi} by following a procedure developed by M. Fortin \cite{mixed} based on the $L^2$ norm for the variable $u_h$. In fact, under the condition of \eqref{inf-sup-pre}, it can be shown that the following identity holds true:
$$
(\nabla_w\cdot ({\bf Q_h\bq}), w)_T = (\nabla\cdot\bq, w)_T\qquad \forall w\in P_{r}(T),
$$
where ${\bf Q_h\bq}=\{{\bf Q}_0\bq, Q_b(\bq\cdot\bn_T) \bn_T\}\in Z_h$ is the usual $L^2$ projection of $\bq\in [H^1(\Omega)]^d$ with ${\bf Q}_0$ and $Q_b$ being the local $L^2$ projection operators onto the corresponding local finite element spaces.

It should be noted that the {\em inf-sup} condition of Babu\u{s}ka \cite{babuska} and Brezzi \cite{brezzi} may still hold true even under different norms for the finite element space $W_h$ if the integers do not satisfy \eqref{inf-sup-pre}; see \cite{wg-mixed} for such a development.

The parameter $\alpha$ should be adjusted accordingly for the best possible convergence in the error estimate. A systematic study on the convergence and stability of the mixed WG-FEM remains to be conducted.

\section{Hybridized Mixed WG-FEM}\label{Section:hmwgfem}

Like the hybridized mixed finite element method \eqref{hmfem:01}-\eqref{hmfem:03}, the mixed WG-FEM \eqref{wg-dm1}-\eqref{wg-dm2} can be hybridized by following the idea of Fraeijs de Veubeke \cite{hmf} that relaxes the global nature of the unknown variables corresponding to $q_b\bn_T$ as in $\bq_h=\{\bq_0, q_b\bn_T\}$ on each element $T\in\T_h$. Note that the weak finite element functions $\bv=\{\bv_0, v_b\bn_T\}\in Z_h=Z_h^{(k,s)}$ defined by \eqref{mixed-wgfem-Qspace}
are single-valued along the normal direction on the $d-1$ dimensional finite element partition $\E_h$ for the set of the element boundaries $\partial\T_h$. The idea of Fraeijs de Veubeke hybridization is to eliminate this single-value requirement for $v_b\bn_T$ from the space $Z_h$ thereby obtaining a space $\tilde Z_h$ and to impose instead the desired single-value property on the solution $\bq_h\in \tilde Z_h$ via Lagrangian multipliers.

\smallskip
{\color{blue}\bf Weak finite element spaces:}\
For the given polygonal or polyhedral partition $\T_h$, we introduce the finite element space $\tilde Z_h:=\tilde Z_h^{(k,s)}$ as follows:
\begin{equation}\label{Zh-MWG-hybridized-0}
\tilde Z_h^{(k,s)}=\prod_{T\in\T_h}\tilde Z_{k,s}(T),
\end{equation}
where
\begin{equation}\label{Zh-MWG-hybridized}
\tilde Z_{k,s}(T)=\left\{\bq=\{\bq_0,q_b\bn_T\}:\;  \bq_0\in [P_k(T)]^d,\;  q_b|_e\in P_s(e) \; \forall e\in \E_h\cap\pT\right\}.
\end{equation}
Moreover, denote by $\Lambda_h:=\Lambda_h^{(s)}\subset L^2(\partial\T_h)$ the finite dimensional space given by
\begin{equation}\label{LagrangeMWG-Space}
\Lambda_h^{(s)}=\left\{\sigma: \  \ \sigma|_e \in P_s(e)\; \forall e\in \E_h\right\},
\end{equation}
where $P_s(e)$ stands for the polynomial subspace consisting of all polynomials of degree $s$ and less on the element $e\in\E_h$. Denote by $\Lambda_h^0\subset \Lambda_h$ the subspace of $\Lambda_h$ with vanishing value on $\partial\Omega$; i.e.,
\begin{equation}\label{LagrangeMWG-Space0}
\Lambda_h^0=\left\{\sigma\in\Lambda_h: \;  \sigma|_{e\cap\partial\Omega}=0 \;\; \forall e\in\E_h\right\}.
\end{equation}

{\color{blue}\bf Discrete weak divergence:}\ For any $\bv=\{\bv_0,\bv_b\}\in \tilde Z_{k,s}(T)$, the discrete weak divergence $\nabla_{w}\cdot\bv\in P_{r}(T)$ on $T\in\T_h$ is given by
\begin{equation}\label{d-w-div-hwgfem}
(\nabla_{w}\cdot\bv, \phi)_T = -(\bv_0,\;\nabla \phi)_T+ \l \bv_b,\;
\phi\bn_T \r_\pT\qquad\forall \phi\in P_{r}(T).
\end{equation}

{\color{blue}\bf Stabilizer/smoother:}\ On each element $T\in\T_h$, the local stabilizer/smoother is constructed to provide a weak characterization of the continuity of the vector field along the normal direction across the element interfaces. The following is one such example that serves the purpose well:
\begin{equation}\label{smoother-hmixed-wgfem}
s_T(\bq,\bv) = \rho h_T^{\alpha}\langle Q_b (\bq_0\cdot\bn_T)-q_b,\;Q_b (\bv_0\cdot\bn_T)-v_b
\rangle_{\pT}
\end{equation}
for $\bv,\;\bq\in \tilde Z_{k,s}(T)$, where $Q_b$ is the $L^2$ projection operator onto the space $\Lambda_h^{(s)}$ (which is defined locally on each $L^2(e)$), $\rho>0$ and $\alpha$ are parameters at user's choice.

\begin{algorithm-hmwgfem} Let the finite element spaces be given as follows: $W_h=W_h^{(r)}$ by \eqref{mixed-wgfem-Uspace}, $\tilde{Z}_h=\prod_{T\in\T_h} \tilde Z_{k,s}(T)$ by \eqref{Zh-MWG-hybridized-0}, and $\Lambda_h^0$ by \eqref{LagrangeMWG-Space0}. The hybridized mixed weak Galerkin finite element method seeks $\bq_h\in \tilde{Z}_h$, $u_h\in W_h$, and $u_b\in \Lambda_h^0$ satisfying
\begin{eqnarray}
s_T(\bq_h, \bv)+(a^{-1}\bq_{0}, \bv_0)_T && \nonumber \\
- (\nabla_w\cdot\bv, u_h)_T +\langle u_b, \bv_b\cdot\bn_T\rangle_\pT &=& 0\qquad \forall \bv\in \tilde{Z}_{k,s}(T),\ T\in\T_h,\label{hmwgfem:01}\\
(\nabla_w\cdot\bq_h, w)_T &=& (f, w)_T\qquad \forall w\in W_h, \ T\in\T_h,\label{hmwgfem:02}\\
\sum_{T\in\T_h}\langle \sigma, \bq_b\cdot\bn_T\rangle_\pT &=& 0\qquad \forall \sigma\in \Lambda_h^0.\label{hmwgfem:03}
\end{eqnarray}
\end{algorithm-hmwgfem}

The hybridized mixed weak Galerkin finite element method \eqref{hmwgfem:01}-\eqref{hmwgfem:03} is equivalent to the mixed weak Galerkin finite element method \eqref{wg-dm1}-\eqref{wg-dm2} in the sense that the numerical solutions for $\bq_h$ and $u_h$ resulting from both schemes are identical. In fact, the equation \eqref{hmwgfem:03} implies that the solution $\bq_h=\prod_{T\in\T_h}\{\bq_0,q_b\bn_T\}$ arising from \eqref{hmwgfem:01}-\eqref{hmwgfem:03} is single-valued at each inter-element interface $\pT_1\cap\pT_2$ along the normal direction; i.e.,
$$
(q_b\bn_{T_1})|_{T_1}\cdot\bn_{T_1}=-(q_b\bn_{T_2})|_{T_2}\cdot\bn_{T_2}
$$
on the element interface $\pT_1\cap\pT_2$ should the set has dimension $d-1$. Thus, by restricting the test function $\bv$ to the closed subspace $Z_h\subset \tilde Z_h$, one arrives at the system of equations \eqref{wg-dm1}-\eqref{wg-dm2}. Assuming the satisfaction of the {\em inf-sup} condition of Babu\u{s}ka \cite{babuska} and Brezzi \cite{brezzi} (which can be easily verified), the solution uniqueness for \eqref{wg-dm1}-\eqref{wg-dm2} then implies the equivalence of the mixed WG-FEM and the hybridized mixed WG-FEM.

\section{A Reformulation of the Hybridized Mixed WG-FEM}\label{Section:hmwgfem-re}

The goal of this reformulation for the hybridized mixed WG-FEM is to obtain a formulation that can be easily compared with the HDG method.

Consider the hybridized Mixed WG-FEM given by \eqref{hmwgfem:01}-\eqref{hmwgfem:03}. By choosing $\bv=\{{\mathbf 0},v_b\bn_T\}\in \tilde{Z}_{k,s}(T)$, we arrive at
$$
(\nabla_w\cdot\bv, u_h)_T = -(0, \nabla u_h)_T + \langle v_b, u_h\rangle_\pT=\langle v_b, u_h\rangle_\pT.
$$
From the above equation and the fact that $\bv_b\cdot\bn_T=v_b$ on $\pT$, we have from \eqref{hmwgfem:01} that
\begin{equation}\label{Compare:001}
- \rho h_T^\alpha \langle Q_b (\bq_0\cdot\bn_T) - q_b, v_b \rangle_{\pT} + \langle u_b-u_h, v_b \rangle_\pT = 0\quad \forall v_b\in \Lambda_h,
\end{equation}
which leads to
\begin{equation}\label{Compare:002}
- \rho h_T^\alpha \left(Q_b (\bq_0\cdot\bn_T) - q_b\right) + u_b- Q_b u_h = 0,\qquad\mbox{on } \pT
\end{equation}
or equivalently
\begin{equation}\label{Compare:003}
q_b =  Q_b(\bq_0\cdot\bn) + \rho^{-1}h_T^{-\alpha} (Q_b(u_h|_\pT)-u_b),\qquad\mbox{on } \pT.
\end{equation}

Next, by choosing the test function as $\bv=\{\bv_0, {\mathbf 0}\}\in \tilde{Z}_{k,s}(T)$ (i.e., by setting $\bv_b={\mathbf 0}$) in \eqref{hmwgfem:01} we obtain
$$
 \rho h_T^\alpha \langle Q_b (\bq_0\cdot\bn_T) - q_b, Q_b (\bv_0\cdot\bn_T) \rangle_{\pT}+ (a^{-1}\bq_{0}, \bv_{0})_T +(\bv_0, \nabla u_h)_T=0,
$$
which, after applying the integration by parts, leads to
\begin{equation}\label{Compare:005}
\begin{split}
 \rho h_T^\alpha \langle Q_b (\bq_0\cdot\bn_T)& - q_b, Q_b (\bv_0\cdot\bn_T) \rangle_{\pT} + (a^{-1}\bq_{0}, \bv_{0})_T \\
& -(\nabla\cdot\bv_0, u_h)_T+\langle u_h, \bv_0\cdot\bn_T\rangle_\pT=0.
\end{split}
\end{equation}
Substituting \eqref{Compare:003} into \eqref{Compare:005} yields
\begin{eqnarray}\label{Compare:006}
 (a^{-1}\bq_{0}, \bv_{0})_T -(\nabla\cdot\bv_0, u_h)_T+\langle u_b, \bv_0\cdot\bn_T\rangle_\pT+\langle u_h - Q_b u_h, \bv_0\cdot\bn_T\rangle_\pT =0.
\end{eqnarray}

Third, from the definition of the weak divergence \eqref{d-w-div-hwgfem}, the second equation \eqref{hmwgfem:02} in the mixed WG-FEM can be rewritten as
\begin{equation}\label{Compare:010}
-(\bq_0, \nabla w)_T +\langle \bq_b\cdot\bn_T, w\rangle_\pT = (f,w)_T.
\end{equation}

In summary, a reformulation of the hybridized mixed WG-FEM can be stated as follows:

\noindent{\bf Hybridized Mixed WG-FEM (version-2):}\ {\em
Let the finite element spaces be given as follows: $W_h=W_h^{(r)}$  by \eqref{mixed-wgfem-Uspace}, $\tilde{Z}_h=\prod_{T\in\T_h} \tilde Z_{k,s}(T)$ by \eqref{Zh-MWG-hybridized-0}, and $\Lambda_h^0$ by \eqref{LagrangeMWG-Space0}. The hybridized mixed weak Galerkin finite element method seeks $\bq_h\in \tilde{Z}_h$, $u_h\in W_h$, and $u_b\in \Lambda_h^0$ satisfying, on each element $T\in\T_h$, the following equations:
\begin{eqnarray}
(a^{-1}\bq_{0}, \bv_{0})_T -(\nabla\cdot\bv_0, u_h)_T&&\nonumber\\
+\langle u_b, \bv_0\cdot\bn_T\rangle_\pT+\langle u_h - Q_b u_h, \bv_0\cdot\bn_T\rangle_\pT &=&0\quad \forall\bv_0\in [P_k(T)]^d,\label{version2-hmwgfem:01}\\
-(\bq_0, \nabla w)_T +\langle \bq_b\cdot\bn_T, w\rangle_\pT &=& (f,w)_T
\quad \forall w\in P_r(T),\label{version2-hmwgfem:02}\\
\sum_{T\in\T_h}\langle \sigma, \bq_b\cdot\bn_T\rangle_\pT &=& 0\quad \forall \sigma\in \Lambda_h^0,\label{version2-hmwgfem:03}\\
Q_b(\bq_0\cdot\bn) + \rho^{-1}h_T^{-\alpha} (Q_b(u_h|_\pT)-u_b)&=&q_b,\quad\mbox{on } \pT, \label{version2-hmwgfem:04}
\end{eqnarray}
where $Q_b$ is the $L^2$ projection operator onto the space $\Lambda_h^{(s)}$ (which is locally defined on $L^2(e)$ for each element $e\in\E_h$).
}

\section{HDG is a special case of the Hybridized Mixed WG-FEM}\label{Section:hdg}

The goal of this section is to draw a connection between the hybridized mixed WG-FEM and the HDG method \cite{cgl}. Let us first describe the HDG scheme for the model problem \eqref{pde}-\eqref{bc} by following the presentation in \cite{cock}.

Let $\T_h$ be a finite element partition of the polygonal or polyhedral domain $\Omega$ consisting of
polygons in two dimension or polyhedra in three dimension. Denote by $\E_h$ the set of all edges or flat faces in $\T_h$. Define the set of element boundaries by $\partial\T_h=\bigcup_{T\in\T_h}\pT$. Note that $\E_h$ can be viewed as a finite element partition of the set $\partial\T_h$ which is consistent with the partition $\T_h$.

Introduce the following notations:
\begin{eqnarray*}
  (v,w)_{\partial\T_h}&=&\sum_{T\in\T_h}(v,w)_T=\sum_{T\in\T_h}\int_T vw dT,\\
   \l v,w\r_{\partial\T_h}&=&\sum_{T\in\T_h} \l v,w\r_\pT=\sum_{T\in\T_h} \int_\pT vw ds,
\end{eqnarray*}
where $\int_\pT vw ds$ represents the integral on the element boundary $\pT$ with $ds$ being the differential of the boundary length/area.

There are three unknown variables $(\tilde\bq_h,u_h,\hat{u}_h)\in \bV_h\times W_h\times M_h$ involved in the HDG formulation introduced in 2009 \cite{cgl}, where the associated finite element spaces are defined as follows:
\begin{eqnarray}
\bV_h&=&\{\bv\in {\bf L}^2(\Omega):\; \bv|_T\in \bV(T), \;\forall T\in\T_h\},\label{hdg-space1}\\
W_h&=&\{w\in L^2(\Omega):\; w|_T\in W(T), \;\forall T\in\T_h\},\label{hdg-space2}\\
M_h&=&\{\mu\in L^2(\E_h):\; \mu|_e\in M(e), \;\forall e\in\E_h\},\label{hdg-space3}
\end{eqnarray}
where $\bV(T)$, $W(T)$, and $M(e)$ are local spaces consisting of polynomials of various degrees.

\begin{algorithm-hdg}
The Hybridizable Discontinuous Galerkin (HDG) method \cite{cgl} for the model problem (\ref{pde})-(\ref{bc}) seeks  $(\tilde\bq_h,u_h,\hat{u}_h)\in \bV_h\times W_h\times M_h$ such that, with $c=a^{-1}$,
\begin{eqnarray}
(c\tilde\bq_h,\bv)_{\T_h}-(u_h, \nabla\cdot\bv)_{\T_h}+\langle \hat{u}_h,\bv\cdot\bn\rangle_{\partial\T_h}&=&0,\label{e111}\\
-(\tilde\bq_h, \nabla w)_{\T_h}+\l \hat{\bq}_h\cdot\bn, w\rangle_{\partial\T_h}&=&(f,w),\label{e222}\\
\langle \hat{\bq}_h\cdot\bn, \mu\rangle_{\partial\T_h\setminus\partial\Omega}&=&0,\label{e333}\\
\l \hat{u}_h,\mu\r_{\partial\Omega}&=&0,\label{e444}
\end{eqnarray}
for all $(\bv,w,\mu)\in \bV_h\times W_h\times M_h$ and
\begin{equation}\label{e555}
\hat{\bq}_h\cdot\bn=\tilde\bq_h\cdot\bn+\tau (u_h-\hat{u}_h)\quad {\rm on} \;\pT \; \forall T\in\T_h,
\end{equation}
where $\tau$ is the stabilization function.
\end{algorithm-hdg}

{\color{blue} We ask the question of whether the hybridized mixed WG-FEM is equivalent to the HDG method.} To address this question, we shall make a comparison between the two methods on the following set of defining variables:
\begin{enumerate}
\item The approximation functions and the spaces where they belong to;
\item The equations that define the approximation functions; i.e., the discrete linear systems for each of them.
\end{enumerate}

{\color{blue}\bf On approximating functions:}\ Table \ref{Table:01} shows a comparison of the approximating functions and the spaces where they are defined. The following three set of variables can be identified as being equivalent, as their spaces have the flexibility of being chosen the same:
$$
\bq_0 \leftrightarrow \tilde\bq_h,\quad u_h \leftrightarrow u_h,\quad
u_b \leftrightarrow \hat{u}_h.
$$
Note that the boundary flux $q_b\bn_T \leftrightarrow \hat{\bq}_h$ may have significant differences between these two methods. In the hybridized mixed WG-FEM, the function $q_b$ is a piecewise polynomial of degree $s$ (i.e., the same as in the definition of the Lagrangian space $\Lambda_h$). But the corresponding variable $\hat{\bq}_h\cdot\bn_T$ in the HDG method was given by the equation \eqref{e555} so that its degree is determined by all three spaces of $\bV(T), \ W(T)$, and $M(\pT)$. This difference indicates that the two equations \eqref{e555} and \eqref{version2-hmwgfem:04} are generally not the same, even with the selection of $\tau=\rho^{-1} h_T^{-\alpha}$ on the stabilization parameter.

\begin{table}[h]
\begin{center}
\caption{Comparison of approximating functions and their spaces for the hybridized mixed WG-FEM and the HDG \cite{cgl}.}\label{Table:01}
\begin{tikzpicture}
\clip node (m) [matrix,matrix of nodes,
fill=black!20,inner sep=0pt,
nodes in empty cells,
nodes={minimum height=1cm,minimum width=2.6cm,anchor=center,outer sep=0,font=\sffamily},
row 1/.style={nodes={fill=black,text=white}},
column 1/.style={nodes={fill=gray,text=white,align=right,text width=3.2cm,text depth=0.8ex}},
column 2/.style={text width=5.2cm,align=center,every even row/.style={nodes={fill=white}}},
column 3/.style={text width=5.2cm,align=center,every even row/.style={nodes={fill=white}},},
row 1 column 1/.style={nodes={fill=gray}},
prefix after command={[rounded corners=4mm] (m.north east) rectangle (m.south west)}
] {
                     & Hybridized Mixed WG-FEM   & HDG \\
Vector Field $\bq$:\;  & $(\bq_0, q_b\bn_T)|_T \in \tilde{Z}_{k,s}(T)$ & $(\tilde\bq_h, \hat\bq_h)|_T\in \bV(T)\times X(T)$\\
Scalar Function $u$:\; & $u_h|_T\in P_r(T)$                               & $u_h|_T\in W(T)$ \\
Lagrangian Multiplier:\; & $u_b|_\pT\in \Lambda_s(\pT)$ & $\hat{u}_h|_T\in M(\pT)$\\
Remarks:\; & $\Lambda_s(\pT):=\Lambda_h^{0}|_\pT$ & $X(T)$ depends on $\bV(T),\ W(T), \ M(e)$ as shown in \eqref{e555} \\
};
\end{tikzpicture}
\end{center}
\end{table}

{\color{blue}\bf On the discrete linear systems:}\ Table \ref{Table:02} compares the two systems of linear equations arising from the hybridized mixed WG-FEM and the HDG method. The table shows that the first and the fourth equations are generally not identical, while the other two equations are identical. Observe that the difference between Eqn. \eqref{version2-hmwgfem:01} and Eqn. \eqref{e111} is given by
\begin{equation}\label{1st-eqn-diff}
ErrorEqn_1:=\langle u_h - Q_b u_h, \bv_0\cdot\bn_T\rangle_\pT,
\end{equation}
and, assuming $\tau=\rho^{-1}h_T^{-\alpha}$, the difference between Eqn. \eqref{version2-hmwgfem:04} and Eqn. \eqref{e555} is given by
\begin{equation}\label{4th-eqn-diff}
ErrorEqn_4:=(I-Q_b)(\bq_0\cdot\bn_T) + \tau (I-Q_b) (u_h|_\pT),
\end{equation}
where $I$ is the identity operator.

\begin{table}[h]
\begin{center}
\caption{Comparison of the discrete linear systems for the hybridized mixed WG-FEM and the HDG \cite{cgl}.}\label{Table:02}
\begin{tikzpicture}
\clip node (m) [matrix,matrix of nodes,
fill=black!20,inner sep=0pt,
nodes in empty cells,
nodes={minimum height=1cm,minimum width=2.6cm,anchor=center,outer sep=0,font=\sffamily},
row 1/.style={nodes={fill=black,text=white}},
column 1/.style={nodes={fill=gray,text=white,align=right,text width=2.2cm,text depth=0.5ex}},
column 2/.style={text width=4.0cm,align=center,every even row/.style={nodes={fill=white}}},
column 3/.style={text width=2.5cm,align=center,every even row/.style={nodes={fill=white}},},
column 4/.style={text width=3.0cm,align=center,every even row/.style={nodes={fill=white}},},
row 1 column 1/.style={nodes={fill=gray}},
prefix after command={[rounded corners=4mm] (m.north east) rectangle (m.south west)}
] {
                     & Hybridized Mixed WG-FEM   & HDG & Comparison\\
1st Equation:\;  & Eqn. \eqref{version2-hmwgfem:01} & Eqn. \eqref{e111} & Not identical \\
2nd Equation:\; &  Eqn. \eqref{version2-hmwgfem:02}  & Eqn. \eqref{e222} & Identical \\
3rd Equation:\; & Eqn. \eqref{version2-hmwgfem:03} & Eqn. \eqref{e333} & Identical \\
4th Equation:\; & Eqn. \eqref{version2-hmwgfem:04} & Eqn. \eqref{e555} & Not Identical\\
};
\end{tikzpicture}
\end{center}
\end{table}

{\color{black}\bf Is the HDG \cite{cgl} equivalent to the Hybridized Mixed WG-FEM?}\; {\color{blue} The answer is generally negative: these two methods are generally different from each other, but they might be equivalent for some particular examples of the finite element spaces}. For example, Table \ref{Table:02} shows that the two methods are equivalent as linear systems if both $ErrorEqn_1$ \eqref{1st-eqn-diff} and $ErrorEqn_4$ \eqref{4th-eqn-diff} equal to zero. For $ErrorEqn_4=0$, one must have
\begin{eqnarray}
(I-Q_b)(\bq_0\cdot\bn_T)&=&0\qquad \forall \bq_0 \in [P_k(T)]^d \mbox{ or } \bV(T),\label{Error:001}\\
(I-Q_b) (u_h|_\pT) &=&0\qquad \forall u_h\in P_r(T) \mbox { or } W(T).\label{Error:002}
\end{eqnarray}
Observe that \eqref{Error:002} implies $ErrorEqn_1=0$. Recall that $Q_b$ is the $L^2$ projection operator onto the Lagrangian space ($\Lambda_h$ in hybridized mixed WG-FEM and $M_h$ in HDG --- both should consist of piecewise polynomials on the edge partition $\E_h$ and they can be chosen to be identical). When the Lagrangian space (which consists of piecewise polynomials of degree $s$) is sufficiently rich that covers the trace of $u_h$ (polynomials of degree $r$) and $\bq_0\cdot\bn_T$ (polynomials of degree $k$) or equivalently when $s\ge k$ and $s\ge r$, we would have the validity of \eqref{Error:001} and \eqref{Error:002} so that the two methods are equivalent. Otherwise, at least one of the two equations \eqref{Error:001} and \eqref{Error:002} will not be satisfied so that the HDG and the hybridized mixed WG-FEM are totally different numerical schemes. {\color{blue} In conclusion, the HDG is a special case of the hybridized mixed WG-FEM (the case of sufficiently rich Lagrangian space that covers the trace of both the vector field $\bq_h$ and the scalar field $u_h$), and these two methods are generally different from each other.}

\section{Primal WG $\neq$ the HDG \cite{cgl}}\label{Section:WG-NE-HDG}

Let us first rewrite the HDG method \eqref{e111}-\eqref{e555} \cite{cgl} into a form that is convenient for making comparisons with the weak Galerkin finite element method \eqref{wg}.

For $(w,\mu)\in W_h\times M_h$, one may define $\tilde\bq_{_{w,\mu}}\in \bV_h$ such that
\begin{equation}\label{flux}
(a^{-1}\tilde\bq_{_{w,\mu}},\bv)_{\T_h}-(w, \nabla\cdot\bv)_{\T_h}+\langle \mu,\bv\cdot\bn\rangle_{\partial\T_h}=0 \qquad \forall \bv\in \bV_h.
\end{equation}
By letting $\bv=\tilde\bq_{_{u_h,\hat{u}_h}}$ in (\ref{flux}) one arrives at
\begin{equation}\label{flux1}
(a^{-1}\tilde\bq_{_{w,\mu}},\tilde\bq_{_{u_h,\hat{u}_h}})_{\T_h}-(w, \nabla\cdot\tilde\bq_{_{u_h,\hat{u}_h}})_{\T_h}+\langle \mu,\tilde\bq_{_{u_h,\hat{u}_h}}\cdot\bn\rangle_{\partial\T_h}=0.
\end{equation}
Using the integration by parts and the equation (\ref{e555}), one may rewrite (\ref{e222}) as follows:
\begin{equation}\label{m1}
(\nabla\cdot\tilde\bq_h,w)_{\T_h}+\tau\langle u_h-\hat{u}_h, w\rangle_{\partial\T_h}=(f,w)\qquad \forall w\in W_h.
\end{equation}
Note that for the solution of the HDG scheme \eqref{e111}-\eqref{e555}, one has $\tilde\bq_h=\tilde\bq_{_{u_h,\hat{u}_h}}$ from \eqref{e111}. Thus, combining (\ref{flux1}) with (\ref{m1}) gives (with $c=a^{-1}$)
\begin{equation}\label{m2}
(c\tilde\bq_{_{w,\mu}},\tilde\bq_{_{u_h,\hat{u}_h}})_{\T_h}+\tau\langle u_h-\hat{u}_h, w\rangle_{\partial\T_h}+\langle \tilde\bq_{_{u_h,\hat{u}_h}}\cdot\bn,\mu\rangle_{\partial\T_h}=(f,w)
\end{equation}
for all $w\in W_h$.
Now substituting $\hat{\bq}_h$ of (\ref{e555}) into (\ref{e333}) yields
\[
\langle\tilde\bq_{_{u_h,\hat{u}_h}}\cdot\bn,\mu\rangle_{\partial\T_h/\partial\Omega}=-\tau\langle u_h-\hat{u}_h, \mu\rangle_{\partial\T_h/\partial\Omega}.
\]
Using the equation above, one may rewrite (\ref{m2}) as follows:
\begin{equation*}
(c\tilde\bq_{_{u_h,\hat{u}_h}},\tilde\bq_{_{w,\mu}})_{\T_h}+\tau\langle u_h-\hat{u}_h, w-\mu\rangle_{\partial\T_h}=(f,w)
\end{equation*}
for all $w\in W_h$ and $\mu\in M_h^0$, where
$$
M_h^0=\left\{\mu\in M_h:\; \mu|_{\partial\Omega}=0\right\}.
$$

Therefore, the HDG method (\ref{e111})-(\ref{e555}) can be reformulated as follows:

\begin{algorithm-hdg} (Version-2). \ The Hybridizable Discontinuous Galerkin (HDG) method seeks $u_h\in W_h$ and $\hat{u}_h\in M_h^0$ such that
\begin{equation}\label{rhdg1}
(c\tilde\bq_{_{u_h,\hat{u}_h}},\tilde\bq_{_{w,\mu}})_{\T_h}+\tau\langle u_h-\hat{u}_h, w-\mu\rangle_{\partial\T_h}=(f,w),
\end{equation}
for all $(w,\mu)\in W_h\times M_h^0$. Here $c=a^{-1}$ and
$\tilde\bq_{_{u_h,\hat{u}_h}}$ is defined as in \eqref{flux}.
\end{algorithm-hdg}

\medskip

We are now in a position to show that the primal {\color{blue} WG-FEM and the HDG are two distinct numerical methods}.

\begin{proposition}
The HDG method (\ref{e111})-(\ref{e555}) and the primal WG-FEM method (\ref{wg}) are not equivalent for the general model problem \eqref{pde}-\eqref{bc}.
\end{proposition}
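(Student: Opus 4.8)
The plan is to show the two methods differ by exhibiting a structural obstruction: the primal WG-FEM \eqref{wg} is posed in terms of the discrete weak gradient $\nabla_w$, whose range lies in $[P_r(T)]^d$ with $r=k-1$ in the typical setting, whereas the HDG method in its Version-2 form \eqref{rhdg1} uses the auxiliary flux $\tilde\bq_{_{w,\mu}}\in\bV_h$ defined by \eqref{flux}, which is an $L^2$-projection-type object rather than a pointwise differential reconstruction. Concretely, I would first rewrite the primal WG-FEM in a form parallel to \eqref{rhdg1}: identify the weak function $u_h=\{u_0,u_b\}$ with the pair $(u_h,\hat u_h):=(u_0,u_b)$ (after matching the spaces $W_h^0\leftrightarrow W_h\times M_h^0$), and observe that $\nabla_w u_h$ is, by its defining identity \eqref{w-g}, exactly the element $\bV_h$-object satisfying $(\nabla_w u_h,\bv)_T=-(u_0,\nabla\cdot\bv)_T+\langle u_b,\bv\cdot\bn_T\rangle_{\pT}$ for $\bv\in[P_r(T)]^d$. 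Comparing with \eqref{flux}, which reads $(a^{-1}\tilde\bq_{_{w,\mu}},\bv)_T=(w,\nabla\cdot\bv)_T-\langle\mu,\bv\cdot\bn_T\rangle_{\pT}$, we see that $\tilde\bq_{_{w,\mu}}=-a\,\nabla_w\{w,\mu\}$ \emph{only when} $\bV_h$ coincides with the weak-gradient space $[P_r(T)]^d$ and $a$ is constant so that multiplication by $a^{-1}$ preserves the space; in general $\tilde\bq_{_{w,\mu}}$ is the $a^{-1}$-weighted projection of the weak gradient onto $\bV_h$, a genuinely different object.

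With this correspondence set up, the next step is to compare the two bilinear forms term by term. The primal WG form gives $(a\nabla_w u_h,\nabla_w v)+s_p(u_h,v)$, while the HDG form gives $(c\,\tilde\bq_{_{u_h,\hat u_h}},\tilde\bq_{_{v,\mu}})+\tau\langle u_h-\hat u_h,v-\mu\rangle_{\partial\T_h}$. The stabilizer terms $s_p$ and the $\tau$-term can be matched by choosing $\tau=\rho h_T^{-1}$ (in the case $s=k$, using the form \eqref{stabilizer:02}). The essential discrepancy is therefore the principal term: $(a\nabla_w u_h,\nabla_w v)$ versus $(a^{-1}\tilde\bq_{_{u_h,\hat u_h}},\tilde\bq_{_{v,\mu}})$. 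These are equal for all test functions precisely when the map $\{w,\mu\}\mapsto a\nabla_w\{w,\mu\}$ lands in $\bV_h$ and is self-adjoint against the $a$-weighted inner product in the required way — which fails as soon as $a$ is a nonconstant tensor, or as soon as $\bV_h$ is chosen strictly larger or incompatible with $[P_{k-1}(T)]^d$. I would then \emph{exhibit a concrete counterexample}: take $d=1$ or $d=2$, a single simplex or a two-element mesh, the lowest-order case $k=1$, $r=0$, $s=1$, with $a$ a simple nonconstant scalar (say $a(\bx)$ piecewise constant taking two values), write out the (finite, small) linear systems for both schemes explicitly, and show the resulting $u_h$ differ. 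This matches the ``illustrative example'' promised in the introduction.

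The main obstacle I anticipate is the bookkeeping in the auxiliary-flux elimination: establishing cleanly that $\tilde\bq_{_{u_h,\hat u_h}}$ is \emph{not} equal to $-a\nabla_w u_h$ in general requires being careful about which space the weak gradient is computed in ($[P_r(T)]^d$ with $r=k-1$) versus the HDG flux space $\bV_h=[P_m(T)]^d$, and about the role of $a^{-1}$ inside the defining integral \eqref{flux}. One must resist the temptation to conclude equivalence in the special aligned case and instead pin down that even there the $a$-weighting breaks the identification unless $a$ is constant; and one must make sure the counterexample mesh/coefficient is genuinely within the scope of the model problem \eqref{pde}--\eqref{bc} (the hypotheses allow piecewise-continuous $a$, so a two-valued $a$ is admissible). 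Once the counterexample's two small linear systems are written down, the inequality of solutions is a routine finite computation, so the proof is complete.
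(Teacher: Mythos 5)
Your overall strategy is the paper's own: rewrite both schemes over the same unknowns $\{u_0,u_b\}$, observe that the discrete weak gradient is (up to sign) the element-wise $L^2$ projection onto $\bV(T)$ of $a^{-1}\tilde\bq_{_{u_0,u_b}}$, match the stabilizers, and thereby reduce the question of equivalence to whether
$(a^{-1}\tilde\bq_{_{u_0,u_b}},\tilde\bq_{_{v_0,v_b}})_{\T_h}$ coincides with
$(a\,\bbQ_h(a^{-1}\tilde\bq_{_{u_0,u_b}}),\bbQ_h(a^{-1}\tilde\bq_{_{v_0,v_b}}))_{\T_h}$,
which is then refuted by a concrete lowest-order computation. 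The paper does exactly this, with $\bV(T)=[P_0(T)]^2$, $W(T)=P_1(T)$, $M(e)=P_1(e)$ on a single square and $a(\bx)=1+x$, obtaining $\tfrac32(\ln 2)^2\neq\ln 2$. Your identification of where the discrepancy lives (the $a^{-1}$-weighting inside the defining integral of $\tilde\bq_{_{w,\mu}}$ versus the unweighted weak gradient) is the right mechanism, and your logical reduction (same solution for every $f$ forces the principal bilinear forms to agree identically) is sound.

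There is, however, one genuine flaw in your proposed counterexample. You suggest, as one option, a two-element mesh with $a$ piecewise constant taking two values. If that means $a$ is constant on each element of the mesh (the natural reading), the counterexample fails: both $\nabla_w$ and $\tilde\bq_{_{w,\mu}}$ are defined element by element, and on any element where $a|_T$ is constant and $\bV(T)=[P_r(T)]^d$ one has $\tilde\bq_{_{w,\mu}}|_T=-a\,\nabla_w\{w,\mu\}|_T$ exactly, so that $(a^{-1}\tilde\bq_{_{u}},\tilde\bq_{_{v}})_T=(a\nabla_w u,\nabla_w v)_T$ on every element and the two schemes produce identical linear systems. The obstruction is not ``$a$ nonconstant on $\Omega$'' but ``$a$ nonconstant \emph{within at least one element}'' (so that multiplication by $a$ does not map the local flux space into itself, and the projection $\bbQ_h$ genuinely intervenes). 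Your single-element option with $a$ varying inside the element is the one that works; you should commit to that version, as the paper does, and discard the element-wise-constant alternative. With that correction the argument goes through.
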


\begin{proof}
First, with the notations used to describe the WG-FEM, the HDG method (\ref{e111})-(\ref{e555}) or its equivalence (\ref{rhdg1}) can be rewritten as follows: Find $\{u_h,\hat{u}_h\}\equiv\{u_0,u_b\}$ such that
\begin{eqnarray}
(c\tilde\bq_{_{u_0,u_b}},\tilde\bq_{_{v_0,v_b}})_{\T_h}+\tau\langle u_0-u_b, v_0-v_b\rangle_{\partial\T_h}&=&(f,v_0)\quad \forall v=\{v_0, v_b\}\in W_h^0. \label{hdg11}
\end{eqnarray}

Next, recall that the primal WG-FEM method \eqref{wg} \cite{wg,wg-soe,wg-PkPk-1} seeks $U_h=\{u_0, u_b\}$ such that
\begin{eqnarray}\label{wg-88}
(a\nabla_w U_h, \nabla_wv)_{\T_h}+\tau\langle Q_b u_0-u_b, Q_b v_0-v_b\rangle_{\pT_h}&=&(f, v_0)\quad \forall v=\{v_0, v_b\}\in W_h^0.
\end{eqnarray}
Here we have assumed that the same stabilizer/smoother parameters and the same finite element spaces for the approximating functions have been chosen for the two methods (note that different selection on the finite element spaces will make a clear difference between these two methods). Please note the differences on the stabilizer/smoother between these two methods: the primal WG-FEM involves the projection operator $Q_b$ while the HDG does not. But we shall ignore this difference by assuming that the finite element spaces are specially chosen so that $Q_b u_0\equiv u_0$; i.e., the trace space for $u_b$ is assumed to be sufficiently rich to cover the trace of the variable $u_0$ on each element boundary.

From the equation (\ref{flux}) and the definition of the weak gradient we have
\begin{eqnarray}
\nabla_w v&=&\bbQ_h (c\tilde\bq_{_{v_0,v_b}}),\quad \mbox{for}\;\; v=\{v_0,v_b\}\in W_h^0,\label{z0}\\
\nabla_w U_h&=&\bbQ_h (c\tilde\bq_{_{u_0,u_b}}),\quad \mbox{for}\;\; U_h=\{u_0,u_b\}\in W_h^0,\label{z1}
\end{eqnarray}
where $\bbQ_h$ is the element-wise $L^2$ projection operator onto $\bV(T)$ for each element $T\in\T_h$. Here we have assumed that $\bV(T)$ (as in HDG) is identical to the gradient space $[P_r(T)]^d$ in WG; see \eqref{w-g} for details.

Using (\ref{z0}) and (\ref{z1}), the WG-FEM system (\ref{wg-88}) can be rewritten as
\begin{eqnarray}
(a\;\bbQ_h (c\tilde\bq_{_{u_h,\hat{u}_h}}), \bbQ_h (c\tilde\bq_{_{w,\mu}}))_{\T_h}+\tau\langle Q_bu_0-u_b, Q_bv_0-v_b\rangle_{\pT_h}&=&(f, v_0)\label{wg11}
\end{eqnarray}
for all $v=\{v_0, v_b\}\in W_h^0$.

\medskip
{\color{blue}\bf  Our justification of $WG \neq HDG$ goes as follows:}
\begin{itemize}
\item The WG reformulation \eqref{wg11} and the HDG reformulation \eqref{hdg11} are based on the same set of basis functions for the same finite element spaces (by choice and for the purpose of making comparisons);
    \item If \eqref{wg11} were equivalent to \eqref{hdg11} (meaning that they have the same solution for any given function $f=f(\bx)$), then one would have (by assuming that $Q_b=I$)
        \begin{equation}\label{ThisIsIt}
        (c\tilde\bq_{_{u_0,u_b}},\tilde\bq_{_{v_0,v_b}})_{\T_h} \equiv (a\;\bbQ_h (c\tilde\bq_{_{u_h,\hat{u}_h}}), \bbQ_h (c\tilde\bq_{_{w,\mu}}))_{\T_h}
        \end{equation}
        for all $\{u_0,u_b\}\in W_h^0$ and $\{v_0,v_b\}\in W_h^0$.
        \item Note that $c=a^{-1}$ in \eqref{ThisIsIt}. Thus, the identify \eqref{ThisIsIt} clearly does not hold true for the case of general variable diffusion coefficient function $a=a(\bx)$ in \eqref{pde}.
\end{itemize}
This completes the justification of the proposition.
\end{proof}

In the rest of this section, we shall present a simple example to show that the identity \eqref{ThisIsIt} does not hold true. To this end, consider the domain $\Omega=(0,1)\times (0,1)$ which is partitioned into rectangular elements $\T_h$, but with only one single element $T=[0,1]\times [0, 1]$. Let the diffusion coefficient be given by $a(\bx)=1+x$. Let the local spaces employed in (\ref{hdg-space1})-(\ref{hdg-space3}) be given by $\bV(T)=[P_0(T)]^2$, $W(T)=P_1(T)$ and $M(e)=P_1(e)$. It follows that $Q_b=I$ and both $\tilde\bq_{_{u_0,u_b}}$ and $\tilde\bq_{_{v_0,v_b}}$ are constant vectors on $T$. Denote by $Q_h$  the $L^2$ projection operator onto $P_{0}(T)$. Since both $\tilde\bq_{_{u_0,u_b}}$ and $\tilde\bq_{_{v_0,v_b}}$ are constant vectors, we then have
\begin{equation}\label{c1}
\bbQ_h (c\tilde\bq_{_{u_0,u_b}})=(Q_h c)\tilde\bq_{_{u_0,u_b}}, \quad \bbQ_h (c\tilde\bq_{_{v_0,v_b}})=(Q_h c)\tilde\bq_{_{v_0,v_b}}.
\end{equation}
It follows from the definition of $Q_h$ that
\[
Q_h(c)=Q_h(a^{-1})=\frac{1}{|T|}\int_T a^{-1} d\bx =\int_0^1\int_0^1 \frac{1}{1+x}dxdy=\ln 2.
\]
Using (\ref{c1}), we have
\begin{eqnarray*}
(a\;\bbQ_h (c\tilde\bq_{_{u_0,u_b}}), \bbQ_h (c\tilde\bq_{_{v_0,v_b}}))_{\T_h}&=&(\tilde\bq_{_{u_0,u_b}}\cdot \tilde\bq_{_{v_0,v_b}})\int _T a Q_h(c)^2d\bx\\
&=&(\ln 2)^2(\tilde\bq_{_{u_0,u_b}}\cdot \tilde\bq_{_{v_0,v_b}})\int_0^1\int_0^1 (1+x)dxdy\\
&=&\frac32(\ln 2)^2 \;(\tilde\bq_{_{u_0,u_b}}\cdot \tilde\bq_{_{v_0,v_b}}).
\end{eqnarray*}
Furthermore, a simple calculation gives the following:
\begin{eqnarray*}
(c\tilde\bq_{_{u_0,u_b}},\tilde\bq_{_{v_0,v_b}})_{\T_h}&=&(\tilde\bq_{_{u_0,u_b}}\cdot \tilde\bq_{_{v_0,v_b}})\int_0^1\int_0^1 \frac{1}{1+x}dxdy=\ln 2\;(\tilde\bq_{_{u_0,u_b}}\cdot \tilde\bq_{_{v_0,v_b}}).
\end{eqnarray*}
Thus, the fact that $\frac32(\ln 2)^2\neq\ln 2$ shows the identity \eqref{ThisIsIt} does not hold true for this simple case.

\section{Remarks on the Second formulation of the HDG \cite{cock,csx}}\label{Section:Remarks}

Other (second) formulations of the HDG method were introduced in 2018 in \cite{cock,csx}. The so-called second formulations of the HDG method are based on four unknown variables for four equations reformulating the model diffusion problem (\ref{pde})-(\ref{bc}).

The following is the second HDG formulation for ($F_a$) stated in \cite{cock}: Find $(\bq_h,\bg_h, u_h,\hat{u}_h)\in \bV_h\times \bV_h\times W_h\times M_h$ such that
\begin{eqnarray}
-(\bg_h,\bv)_{\T_h}-(u_h, \nabla\cdot\bv)_{\T_h}+\langle \hat{u}_h,\bv\cdot\bn\rangle_{\partial\T_h}&=&0\quad\forall\bv\in \bV_h,\label{e11}\\
(\bq_h,\bv)_{\T_h}&=&-(a\bg_h,\bv)_{\T_h} \quad\forall\bv\in \bV_h,\label{e22}\\
-(\bq_h, \nabla w)_{\T_h}+\l \hat{\bq}_h\cdot\bn, w\rangle_{\partial\T_h}&=&(f,w)\quad\forall w\in W_h,\label{e33}\\
\langle \hat{\bq}_h\cdot\bn, \mu\rangle_{\partial\T_h/\partial\Omega}&=&0\quad\forall\mu\in M_h,\label{e44}\\
\l \hat{u}_h,\mu\r_{\partial\Omega}&=&0,\label{e55}
\end{eqnarray}
and
\begin{equation}\label{e66}
\hat{\bq}_h\cdot\bn=\bq_h\cdot\bn+\tau (u_h-\hat{u}_h)\quad\mbox{on } \pT\; \forall T\in\T_h.
\end{equation}

\medskip
We have the following remarks regarding the second HDG method \eqref{e11}-\eqref{e66} and some statements made in \cite{cock}.

\begin{remark}
The second formulation \eqref{e11}-\eqref{e66} was recently developed and published in 2018. It is not difficult to verify that the above second formulation of the HDG method is a reformulation of the primal WG finite element method for the model problem \eqref{pde}-\eqref{bc} with some particular selections on the finite element spaces and the stabilizer/smoother parameters. Although we do not find this formulation appealing (as it involves unnecessary variables), we do appreciate the attentions paid to the development of WG-FEMs by the author of \cite{cock} and do not object any research activities that exploit the connection between existing numerical methods.
\end{remark}

\begin{remark}
Regarding the statement of ``weak Galerkin is a rewriting of HDG'' made in \cite{cock}, we find this statement to be vague and ill-defined. For example, what exactly HDG is as a method? The author of \cite{cock} is encouraged to lay out his principles for the HDG method before making any claims that are scientifically meaningful and accurate. Our understanding of WG-FEM is the following: Weak Galerkin is a generic numerical methodology for PDEs that follows a certain set of basic principles specified in Section \ref{Section:WG}. These principles set a general guideline for an abstract framework in PDE discretization. Consequently, many variations are possible within the weak Galerkin framework in the application to even the same PDE modeling problem. In the authors' view, the weak Galerkin methods have produced new numerical schemes for various PDEs that have not existed in literature. The example illustrated in Section \ref{Section:WG-NE-HDG} shows that the primal WG-FEM is different from the existing HDG algorithms.
\end{remark}

\begin{remark} To the authors' knowledge, the statement of ``The 2013 WG methods [38] are mixed methods'' made in Section 2.2 in \cite{cock} is erroneous. It is in fact trivial to see that the WG-FEM method in \cite{wg} and the mixed finite element method are different for problems with variable diffusion coefficients $a=a(\bx)$.
\end{remark}


\end{document}